\newtheorem{theorem}{Theorem}[section]
\def\ps@pprintTitle{%
   \let\@oddhead\@empty
   \let\@evenhead\@empty
   \let\@oddfoot\@empty
   \let\@evenfoot\@oddfoot
}
\begin{document}

\begin{frontmatter}

\title{Web spline error estimation of non-cooperative elliptic equations for population dynamics}

 \author[label1]{Ayan Chakraborty}
 \address[label1]{FB:513~,~IIT Kanpur}
 \ead{achkbrty@gmail.com}
 \author[label2]{B.V.Rathish Kumar}
 \address[label2]{FB:571~,~IIT Kanpur~,~00915122597660}
 \ead[url]{$http://iitk.ac.in/new/rathish-kumar-b-v$}
  \ead{bvrathishkumar@gmail.com}


\address{Department of Mathematics and Scientific Computing \\ Indian Institute of Technology Kanpur}

\begin{abstract}
We analyze the error of the WEB-S finite element method applied to  elliptic systems with non-cooperative  dominant coupling,with a mixed Dirichlet/Neumann/Robin boundary condition. This problem is strongly related to a posteriori error estimates, giving computable bounds for computational errors and detecting zones in the solution domain where such errors are too large and certain mesh refinements should be performed. These results are based on an extensive regularity analysis of the interface problems of concern.Finally, the error analysis is illustrated by numerical experiments.

\end{abstract}

\begin{keyword}
35J25,65N15,65N30,76E06

\end{keyword}

\end{frontmatter}


\section{Introduction}
The advection-diffusion equation governs several important phenomena in physics and engineering, and it is the basis of advanced fluid problems such as $Navier~Stokes$ problems. Therefore it has been the focus of intense research for quite some time.If the coefficient matrix (cf sec (\ref{sec3}))is $rough$ or $highly ~ varying$ then such problems are challenging to solve using standard finite element methods since very fine meshes are needed in order to resolve the features of the solution. Various approaches have been proposed in this field \citep{ucar2015new,lee2016finite,murthy2015parallel,chaudhary2015web,zhang2016weak,jog2016time}, including local min orthogonal\citep{chen2015estimate} , bilinear finite element \citep{duran2013supercloseness} , multiscale finite element \citep{hou1997multiscale}, mixed multiscale finite element \citep{chen2003mixed} and generalized finite element \citep{melenk1996partition} methods.The special finite element method considered in this paper was introduced by Hollig in \cite{hollig2001weighted}- \citep{hollig2003finite}-\citep{hollig2005introduction}.It has been designed as an approximation of the solution by combining the advantages of B-splines and standard mesh-based trial functions.\\

\subsection{Organization of the paper}
The  article is organized as follows : In section (\ref{sec4}) we briefly provide some  description about WEBS-FEM. Section(\ref{sec3}) are devoted to the analysis of the boundary value problems  and regularity of weak solutions of the problem. We  demonstrates its convergence scheme for different boundary conditions in section (\ref{sec5}) and (\ref{sec6}) respectively.Finally the last two sections are devoted to the numerical experiments and conclusions.\\ Through out this paper following notational conventions have been used: the grid width used for the spline approximation is denoted by h, and for the functions f,g we write f $\preceq$ g, if f $\leq$ cg and f $\asymp$ g ,if f=cg  for some positive constant c which doesn't depend on the grid width , indices, or arguments of functions.

\section{Weighted extended b-splines Approximation Procedure}
\label{sec4}
In this study we focus on the development of WEB method for non cooperative elliptic systems of equations.First  we construct finite elements with scaled translates $b^n_k,k \in  \mathbb{Z}^m$ of the standard $m$-variate tensor product B-spline of order $n$ written as ,  $ b^n_k~ (b_k$ in short form ) which are polynomials of degree $n-1$ in the variables $x_1,x_2,x_3 , \ldots ,x_m$ on each grid cell $Q_l = h ([0,1]^m + l)~,~ k_i \leq l_i\leq k_i + n$ in their support~,~ cf; \cite{hollig2013approximation}-\citep{ciarlet2002finite}-\citep{de2013box} for more details and the references there in.

\begin{figure}[]
\begin{center} 
  \includegraphics[width=0.5\textwidth]{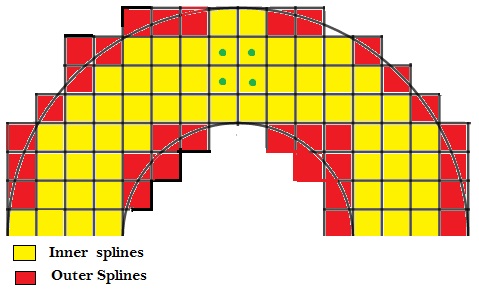}
\caption{Inner and Outer b splines.}
\end{center}
\end{figure}

\subsection{Splines on Bounded Domains}
The Splines $\mathbb{B}^n_h(D)$ on a bounded domain $D\subset \mathbb{R}^m$ consist of all linear combinations $\sum\limits_{k\in K} c_k b^n_{k,h}$ of relevant B-Splines; i.e, the set $K$ of relevant indices contains all $k$ with $b^n_{k,h}(x)\ne 0$ for some $x \in D$ ,where $b^n_{k,h}(x)= b^n (x/h-k)$ is the scaled translates and $h$ is the grid width. \\

The b-splines $b_k$ are piecewise polynomials on the $h$ -grid with vertices $h \mathbb{Z}^2$ and scaled so that the $L_2$ norm $||b_k||_0=||b||_0$ is independent of $h$.

\subsection{Inner and outer splines}
Grid cells $Q = h ([0,1]^m + l$) are partitioned into interior,  boundary and exterior cells depending on whether $Q\subseteq\bar{D}$ , the interior of $Q$ intersects $\partial D$ , or $Q \cap D = \phi $. Among the relevant B-Splines,$b_k,k \in  K$, a distinction is  made between inner B-splines
 $ b_i,i \in  I$ which have at least one interior cell in their support, and outer B-Splines
   $ b_j,j \in J=K \backslash I$
     for which supp $b_j$ consists entirely of boundary and exterior cells.
Now it is tempting to use $B_h := \text{span}\{b_k~:~k \in K\}$ as a finite element approximation space. But b splines may not conform to the boundary  conditions, but this situation can be overcome easily by multiplying with a suitable weight functions.

 \subsection{Weight functions} There are several different techniques for constructing weight functions Rvachev's R-function method provides weight functions \cite{rvachev1995r}.

\subsection{ Weighted extended b-spline :} 
\label{def1}
For $i \in  I$, the WEB-Spline $B_i$ is defined by,
\begin{center}
$B_i=\frac{w}{w(x_i)} \Big (b_i+\sum\limits_{j\in{J}}e_{i,j}b_j \Big)$ ,
\end{center}
where $x_i$ denotes the center of the grid cell $Q_{i+l(i)}$ corresponding  to $b_i$.The Lagrange coefficients $e_{i,j}$ satisfy,
\begin{center}
$|e_{i,j}| \preceq 1, e_{i,j}=0 ~\text{for}~ ||i-j||\succeq 1$
\end{center}

\begin{theorem}
If $\forall j \in J $
\begin{center}
$I(j)=\{ i\in  \mathbb{Z}^m : \alpha_\mu \leqq i_\mu\leqq \alpha_\mu+n-1 \}$
\end{center}
is a closest index array to $j$, then the coefficients,
$$  e_{i,j}=\left\{ \begin{array}{ll}
    \prod_{\mu=1}^m \prod_{\ell=\alpha_{\mu}}^{\alpha_{\mu}+n-1}\frac{j_{\mu}-\ell}{i_{\mu}-\ell} \hspace{2mm}for\hspace{1mm} i \in I(j),\\
    0 \hspace{10 mm} , else\\
      \end{array}
\right.$$
are admissible for constructing web-splines according to Definition (\ref{def1}).
\end{theorem}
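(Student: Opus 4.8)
The plan is to recognize the stated product as a tensor product of univariate Lagrange interpolation weights and then verify directly the two defining properties of an admissible coefficient family in Definition (\ref{def1}), namely the uniform bound $|e_{i,j}|\preceq 1$ and the locality $e_{i,j}=0$ for $\|i-j\|\succeq 1$. Writing $k_\mu := i_\mu-\alpha_\mu\in\{0,\dots,n-1\}$, the inner product over $\ell$ (understood to omit the vanishing factor $\ell=i_\mu$, as is standard for Lagrange weights, since otherwise the denominator has a zero) is exactly the value at the integer node $j_\mu$ of the $\mu$-th univariate cardinal polynomial built on the $n$ nodes $\alpha_\mu,\dots,\alpha_\mu+n-1$. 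Thus $e_{i,j}=\prod_{\mu=1}^m L^{(\mu)}_{i_\mu}(j_\mu)$, which exhibits these as the coefficients reproducing polynomials of coordinate degree $<n$, and it lets both admissibility properties be checked one dimension at a time.

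First I would establish the locality property, which is immediate from the hypothesis that $I(j)$ is a \emph{closest} index array. By construction every $i$ with $e_{i,j}\neq0$ lies in $I(j)$, i.e. $\alpha_\mu\leq i_\mu\leq\alpha_\mu+n-1$ in each coordinate; since the block $I(j)$ is chosen nearest to $j$, the offset $|j_\mu-\alpha_\mu|$ is controlled by a constant depending only on $n$ (and on the fixed number of boundary/exterior cell layers), so every such $i$ satisfies $\|i-j\|_\infty\leq c_0$ for a constant $c_0=c_0(n)$ independent of $h$ and $j$. Contrapositively, once $\|i-j\|$ exceeds this fixed threshold the index $i$ falls outside $I(j)$ and $e_{i,j}=0$, which is precisely $e_{i,j}=0$ for $\|i-j\|\succeq 1$.

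For the uniform bound I would estimate the univariate factors separately. The denominator of $L^{(\mu)}_{i_\mu}(j_\mu)$ is $\prod_{\ell\ne i_\mu}|i_\mu-\ell| = k_\mu!\,(n-1-k_\mu)!\geq 1$, since the surviving differences run over $\{1,\dots,k_\mu\}$ and $\{1,\dots,n-1-k_\mu\}$. The numerator $\prod_{\ell\ne i_\mu}|j_\mu-\ell|$ is a product of $n-1$ integer distances, each controlled by the same closest-array bound $|j_\mu-\ell|\leq c_0$; hence each univariate factor is bounded by a constant $C(n)$. Multiplying the $m$ factors gives $|e_{i,j}|\leq C(n)^m$, a constant that is independent of the grid width $h$, of the indices, and of $j$, which is exactly $|e_{i,j}|\preceq 1$. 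Since the extension in Definition (\ref{def1}) only invokes these two estimates, the coefficients are admissible.

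The routine parts are the factorial evaluation of the denominator and the arithmetic of the numerator bound; the one step that needs genuine care is extracting the uniform estimate $|j_\mu-\alpha_\mu|\preceq 1$ from the phrase ``closest index array,'' since this single geometric fact simultaneously drives locality and keeps the numerator distances bounded. In particular one must confirm that a nearest admissible $n$-block of inner indices always exists and stays within an $h$-independent distance of $j$ — a mild condition on $\partial D$ and on the quasi-uniformity of the grid assumed here — so that the constant $C(n)^m$ does not degenerate as $h\to 0$. This is where I expect the main obstacle to lie, and it is the point at which the geometry of the inner/outer partition must be used explicitly rather than just the algebra of the Lagrange weights.
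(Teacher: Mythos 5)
Your proposal supplies an argument where the paper supplies none: after stating the theorem, the paper gives no proof at all, merely remarking that the description of the $e_{i,j}$'s ``can be found in'' the cited monograph of H\"ollig (\emph{Finite element methods with B-splines}). So the comparison is really with that reference, and there your argument is essentially the standard one: read the product as a tensor product of univariate Lagrange weights (correctly noting that the factor $\ell=i_\mu$ must be omitted --- as printed in the paper the formula divides by zero, a typo you silently repair), bound each denominator below by $k_\mu!\,(n-1-k_\mu)!\geq 1$, bound each numerator by a fixed power of the maximal integer distance from $j_\mu$ to the node block, and get locality for free because $e_{i,j}$ vanishes off $I(j)$. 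These are exactly the two admissibility requirements of Definition (\ref{def1}), namely $|e_{i,j}|\preceq 1$ and $e_{i,j}=0$ for $\|i-j\|\succeq 1$, so the algebraic part of your proof is complete and matches the cited source.

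The one step that is not self-sustaining is the one you yourself flag: the claim that, because $I(j)$ is \emph{closest} to $j$, the offset $|j_\mu-\alpha_\mu|$ is ``controlled by a constant depending only on $n$.'' That does not follow from the word ``closest'': a nearest block of \emph{inner} indices can be arbitrarily far from an outer index, measured in units of $h$, if the domain has for instance an outward cusp, and then both your numerator bound $C(n)$ and the locality radius $c_0$ degenerate as $h\to 0$. The uniform bound $\|i-j\|\preceq 1$ for $i\in I(j)$ is a genuinely geometric lemma: it requires boundary regularity (a Lipschitz or cone-type condition) together with $h$ sufficiently small, which is the hypothesis under which H\"ollig proves it and which is consistent with this paper's standing smoothness assumption on the boundary; the resulting constant depends on the domain, not only on $n$. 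Since your closing paragraph isolates precisely this fact as the crux and states it correctly, the proposal is sound once that lemma is assumed or proved --- but be aware that this geometric statement, not the Lagrange algebra, is the only substantive content of the theorem.
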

 Details description of $e_{ij}$ ' s and its  requirement in construction of webs can be found in \cite{hollig2003finite}.

\section{Formulation of the problem}
\label{sec3}
\subsection{Finite element discretization}
Consider the following classical formulation of the reaction-diffusion problem in terms of a non cooperative elliptic system: Find  $\mathbf{u} = (u_1,u_2)$ such that,

\noindent\begin{minipage}{.5\linewidth}
\begin{align}
\begin{split}
  -\text{div}(\mathfrak{P}\nabla u_1)-\tau_2 u_2 = f_1 \\
 u_1=U_1\\
 \nu^t \cdot \mathfrak{P}\nabla u_1 =g_1\\
  \nu^t \cdot \mathfrak{P}\nabla u_1 + \sigma_1 u_1 =h_1
\end{split}
\label{eq20}
\end{align}
\end{minipage}%
\begin{minipage}{.5\linewidth}
\begin{align}
\begin{split}
  \text{div}(\mathfrak{P}\nabla u_2)+\tau_1 u_1 = f_2~~~,~~~in~\Omega, \\
  u_2=U_2~~~, ~~~on~\partial_1 \\
  -\nu^t \cdot \mathfrak{P}\nabla u_2 =g_2~~~, ~~~on \partial_2,\\
  -\nu^t \cdot \mathfrak{P}\nabla u_2 + \sigma_2 u_2 =h_2~~~, ~~~on~ \partial_3
\end{split}
\label{eq21} 
\end{align}
\end{minipage}
\vspace{5mm}
where $\Omega$ is a bounded domain in $\mathbb{R}^m$ with a sufficiently smooth boundary $\partial\Omega$ such that $\overline{\partial \Omega} = \overline{\partial_1} \cup \overline{\partial_2} \cup \overline{\partial_3}$,$\nu $ is the outward normal to the boundary, and $\partial_1,\partial_2~and~\partial_3$ are disjoint sets with positive measures.In order to formulate the above problem in the weak form, we further assume that the $u_i , f_i \in L_2(\Omega)$, $h_i  \in L_2(\partial_3)$ and $\sigma_i \in L_{\infty}(\partial_3)$. Then from regularity theorems  $u_i , U_i  \in H^2(\Omega)$.Moreover $\tau_j $'s  and $g_i$'s ($i$=1,2) are real essentially bounded as well as $\mathfrak{P}$  is a symmetric  $ m \times m$  matrix of real essentially bounded functions, and is uniformly positive definite,i.e,
\begin{equation}
C_1 |\xi|^2 \leq \mathfrak{P}(\mathbf{x})\xi \cdot \xi \leq C_2 |\xi|^2~~~ \forall \xi \in \mathbb{R}^m~~~~~a.e~ in~ \Omega
\nonumber
\end{equation}
where, $C_1~and~C_2$ are positive constants.In addition we assume $\tau_j  > 0$ a.e on $\Omega$\\
The weak formulation of problem (\ref{eq20})-(\ref{eq21}) then reads: Find $u_i \in U_i + H^1_{\partial_1}(\Omega)$ such that,

\begin{align}
\begin{split}
 (-1)^{i+1}\int_{\Omega}  \mathfrak{P}\nabla u_i\cdot \nabla v_i ~dx& + (-1)^i\int_{\Omega} \tau_j u_jv_i~dx + \int_{\partial_3} \sigma_i u_i v_i ~ds \\ 
 = &\int_{\Omega} f_iv_i~dx + \int_{\partial_2} g_i v_i ~ds + \int_{\partial_3} h_iv_i~ds ~~~\forall v_i \in H^1_{\partial_1}(\Omega)
 \end{split}
 \label{eq22}
\end{align}
where, $i,j = 1,2$ as well $i \ne j$ and,
\begin{align*}
H^1_{\partial_1}(\Omega) = \{ u \in H^1(\Omega) ~:~ u = 0 ~on~ \partial_1 \}
\end{align*}
Let us define a bilinear form $a(\cdot,\cdot)$ and a linear form $G(\cdot)$ as
\begin{align}
a(u_i,v_i)&= (-1)^{i+1} \int_{\Omega}  \mathfrak{P}(\nabla u_i)\cdot \nabla v_i ~dx  + (-1)^i\int_{\Omega} \tau_j u_jv_i~dx \nonumber \\&+ \int_{\partial_3} \sigma_i u_i v_i ~ds ~~,~u_i,v_i \in H^1(\Omega)
\end{align}
\begin{align}
G(v_i)=\int_{\Omega} f_iv_i~dx + \int_{\partial_2} g_i v_i ~ds + \int_{\partial_3} h_iv_i~ds ~~,~~v_i \in H^1(\Omega).
\end{align}
Then the weak formulation (\ref{eq22}) can be rewritten in a compact form: Find $u_i \in U_i + H^1_{\partial_1}(\Omega)$ such that $a(u_i,v_i)=G(v_i)~~ \forall~ v_i \in H^1_{\partial_1}(\Omega)$.
\subsection{Remark}
\label{rem1}
The above conditions on the cofficients of the problem provide the existence and uniqueness of the weak solution $u_i$ defined by (\ref{eq22}) due to the well known Lax-Milgram lemma \citep{brenner2007mathematical}. However it should be stressed here that this lemma is actually applied to the following problem: Find $u_{\star} \in H^1_{\partial_1}(\Omega)$ such that $a(u_{\star},v_i)= \overline{G}(v_i) ~~\forall~ v_i \in H^1_{\partial_1}(\Omega)$  where $\overline{G}(v_i)= G(v_i)-a(U_i,v_i)$,and setting  $ u_i := U_i+u_{\star}$ after that.\\

The energy functional $J$ of problem (\ref{eq22})is defined as,
\begin{equation}
J(v_i)= \frac{1}{2}a(v_i,v_i)- \overline{G}(v_i),~~~\forall~ v_i \in H^1(\Omega)
\end{equation}
It is well known that problem (\ref{eq22}) is equivalent to the problem of finding the minimizer ( which is equal to the above introduced solution $u_{\star}$ ) of the above energy functional $J$ over the set $H^1_{\partial_1}(\Omega)$ and setting again $u_i := U_i +u_{\star}$.\\
In what follows we need Friedrich's inequality,
\begin{equation}
||v_i||_{0,\Omega} \leq  C_{0,\Omega} || \nabla v_i || _{0,\Omega} ~~~~~\forall~ v_i \in H^1_{\partial_1}(\Omega)
\nonumber
\end{equation}
and the inequality from  the trace theorem,
\begin{equation}
||v_i||_{0,\partial\Omega} \leq  C ||v_i || _{1,\Omega} ~~~~~\forall~ v_i \in H^1(\Omega)
\nonumber
\end{equation}
where $  C_{0,\Omega} ~ and~ C$  are positive constants; see \citep{necas2013mathematical} for examples.

\section{Two-sided estimation of the error}
\label{sec5}
Let $u_i^h$ be a function from $U_i+\mathbb{B}_h \subset U_i+ H^1_{\partial_1}(\Omega)$, which we shall consider as an web-spline approximation of $u_i$. We can easily show that if $u^h_i := U_i + u^h_{\star}$,where $u^h_{\star}$ (depending on $i$) $\in \mathbb{B}_h \subset H^1_{\partial_1}(\Omega)$ then , 
\begin{align}
J(u^h_{\star})-J(u_{\star})= \frac{1}{2} a(u_i-u_i^h,u_i-u_i^h)
\label{e1}
\end{align} 
Thus,the main goal in what follows is to show how to effectively estimate the value ,
\begin{align}
a(u_i-u^h_i,u_i-u^h_i) = \int_{\Omega} \mathfrak{P} \nabla (u_i-u^h_i)\cdot \nabla (u_i-u^h_i)~dx &+ \int_{\Omega} \tau_j(u_j-u_j^h)  (u_i-u^h_i)~dx \nonumber \\+\int_{\partial_3}\sigma_i (u_i-u^h_i)^2~ds
\label{eq23}
\end{align}
from above and from below for an arbitrary approximation $u^h_i \in U_i + \mathbb{B}_h$.
\subsection{Upper bound for the error}
\label{subsec5}
We define,
\begin{align*}
||| \mathbf{u}|||_{\Omega} := (\int_{\Omega} \mathfrak{P}u \cdot u ~dx )^{1/2}~~~for ~ & u \in L_2(\Omega,\mathbb{R}^m)\\
H_{2,3}(div,\Omega) := \{ u^{\star} \in L_2(\Omega,\mathbb{R}^m) :~div~u^{\star} &\in L_2(\Omega) , \nu^t \cdot u^{\star} \in L_2(\partial_2 \cup \partial_3) \} \\
\tilde{\Omega}:= \{ x \in \Omega~ : \min_{j=1,2}{\tau_j (x)}\geq \tilde{\theta} > 0 \}
\end{align*}
\begin{theorem}
The following upper estimate holds for the  global error (\ref{eq23}).
\begin{align}
\begin{split}
a(\mathbf{u-u^h,u-u^h}) \preceq  \sum_{\substack{i,j=1 \\ i\neq j}}^2\biggl[\biggl|\biggl|\frac{1}{\sqrt{\tau_j }}\biggl(f_i + div~u_i^{\star} -\tau_j u_j^h \biggr) \biggr|\biggr|^2_{0,\tilde{\Omega}} + \biggl| \biggl| \frac{1}{\sqrt{\sigma_i}} \biggl( h_i - \sigma_i u^h_i - \nu^t \cdot u^{\star}_i \biggr)\biggr|\biggr|^2_{0,\partial_3}\\ +\bigl|\bigl|\bigl| \mathfrak{P}^{-1}u^{\star}_i - \nabla u_i^h\bigr|\bigr|\bigr|^2_{\Omega} + \bigl|\bigl| \chi_{\Omega \setminus \tilde{\Omega}} (f_i + div~ u^{\star}_i - \tau_j u_j^h) \bigr|\bigr|^2_{0,\Omega} \\+ \bigl|\bigl|g_i - \nu^t \cdot u^{\star}_i\bigr|\bigr|^2_{0,\partial_2} + \bigl|\bigl| \sqrt{\tau_j} (u_j-u_j^h)\bigr|\bigr|^2 \biggr] 
\end{split}
\label{eq24}
\end{align} 
where, $ u^{\star}_i \in H_{2,3}(div,\Omega)~and~ \mathbf{u}= (u_1,u_2)$.
\end{theorem}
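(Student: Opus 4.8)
The plan is to derive a functional (Repin-type) a posteriori majorant by testing with the error itself and introducing the free vector field $u_i^\star\in H_{2,3}(\mathrm{div},\Omega)$ as a surrogate for the exact flux $\mathfrak{P}\nabla u_i$; the bound will hold for \emph{every} admissible $u_i^\star$, so in practice one minimizes the right-hand side over $u_i^\star$ to sharpen it. Writing $e_i=u_i-u_i^h\in H^1_{\partial_1}(\Omega)$ (the Dirichlet datum $U_i$ cancels, so the difference is an admissible test function), I would first use the exact weak identity $a(u_i,v_i)=G(v_i)$ with $v_i=e_i$ to rewrite the diffusion-plus-Robin part of the energy, $\int_\Omega\mathfrak{P}|\nabla e_i|^2+\int_{\partial_3}\sigma_i e_i^2$, replacing the exact data by the loads $f_i,g_i,h_i$ and the coupling $\tau_j u_j$, and leaving the computable part $-\int_\Omega\mathfrak{P}\nabla u_i^h\cdot\nabla e_i-\int_{\partial_3}\sigma_i u_i^h e_i$. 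For $i=2$ the second equation is first normalized by an overall sign so that its principal part is positive, matching the convention in (\ref{eq23}).

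Next I would insert the identity obtained from Green's formula for $u_i^\star$, namely $\int_\Omega \mathrm{div}(u_i^\star)\,e_i+\int_\Omega u_i^\star\cdot\nabla e_i-\int_{\partial_2\cup\partial_3}(\nu^t u_i^\star)\,e_i=0$, valid because $e_i=0$ on $\partial_1$ and $u_i^\star\in H_{2,3}(\mathrm{div},\Omega)$ makes every term well defined. Collecting the resulting contributions by their domain of integration reproduces precisely the residuals of (\ref{eq24}): the interior equilibrium residual $f_i+\mathrm{div}\,u_i^\star-\tau_j u_j^h$ tested against $e_i$ (after splitting $\tau_j u_j=\tau_j u_j^h+\tau_j e_j$), the Neumann residual $g_i-\nu^t u_i^\star$ on $\partial_2$, the Robin residual $h_i-\sigma_i u_i^h-\nu^t u_i^\star$ on $\partial_3$, the constitutive (flux) residual $u_i^\star-\mathfrak{P}\nabla u_i^h$ tested against $\nabla e_i$, and a coupling remainder $\int_\Omega\tau_j e_j e_i$.

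Then I would estimate each residual by Cauchy--Schwarz with a weight matched to the relevant part of the energy. On $\tilde\Omega$, where $\tau_j\ge\tilde\theta>0$, the interior residual is paired as $\tau_j^{-1/2}(f_i+\mathrm{div}\,u_i^\star-\tau_j u_j^h)\cdot\tau_j^{1/2}e_i$, producing the weighted norm of (\ref{eq24}) times $\|\sqrt{\tau_j}\,e_i\|_{0,\tilde\Omega}$; the Robin residual pairs with $\sqrt{\sigma_i}\,e_i$; and the flux residual is measured in the $\mathfrak{P}^{-1}$-energy norm, so that $\int_\Omega(u_i^\star-\mathfrak{P}\nabla u_i^h)\cdot\nabla e_i\le|||\mathfrak{P}^{-1}u_i^\star-\nabla u_i^h|||_\Omega\,|||\nabla e_i|||_\Omega$ by Cauchy--Schwarz in the $\mathfrak{P}$-inner product. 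On the degenerate set $\Omega\setminus\tilde\Omega$ the weight $\tau_j^{-1/2}$ is unavailable, so the $\chi_{\Omega\setminus\tilde\Omega}$-residual is paired directly with $e_i$ and $\|e_i\|_{0,\Omega\setminus\tilde\Omega}$ is controlled by Friedrichs' inequality; likewise the $\partial_2$-residual pairs with $e_i|_{\partial_2}$, controlled by the trace inequality. Using the positivity $C_1|\xi|^2\le\mathfrak{P}\xi\cdot\xi$, both the Friedrichs and trace factors reduce to multiples of $|||\nabla e_i|||_\Omega$.

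Finally, applying Young's inequality $ab\le\varepsilon a^2+\tfrac{1}{4\varepsilon}b^2$ to each product with $\varepsilon$ small, the energy-norm factors $|||\nabla e_i|||_\Omega$ and $\|\sqrt{\sigma_i}\,e_i\|_{0,\partial_3}$ are absorbed into the left-hand side, leaving the squared computable residuals. The cross term is handled by $\int_\Omega\tau_j e_j e_i\le\tfrac12\|\sqrt{\tau_j}\,e_j\|^2+\tfrac12\|\sqrt{\tau_j}\,e_i\|^2$, which together with the reaction-weighted factor is what forces the non-computable coupling term $\|\sqrt{\tau_j}(u_j-u_j^h)\|^2$ to be retained on the right; summing the two components $i\ne j$ then yields (\ref{eq24}). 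The hard part is exactly this last step: because the system is non-cooperative the bilinear form is \emph{indefinite}, so the energy is not coercive and one cannot simply divide by $\sqrt{E}$; the cross coupling $\int_\Omega\tau_j e_j e_i$ cannot be absorbed and must instead be carried as the coupling term of the majorant. Guaranteeing that a genuine positive fraction of the diffusion/Robin energy survives on the left once the Friedrichs and trace constants are incorporated, while treating the reaction-degenerate region $\Omega\setminus\tilde\Omega$ separately throughout, is the delicate point of the argument.
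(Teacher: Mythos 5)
Your proposal follows essentially the same route as the paper's own proof: testing the weak identity with the error $e_i=u_i-u_i^h$, inserting the Green's-formula identity for the free flux $u_i^{\star}\in H_{2,3}(\mathrm{div},\Omega)$, collecting the identical residuals (interior equilibrium, Neumann, Robin, constitutive/flux, plus the coupling remainder), applying the same weighted Cauchy--Schwarz pairings with Friedrichs and trace inequalities on $\Omega\setminus\tilde\Omega$ and $\partial_2$, absorbing energy factors by Young's inequality, and retaining the non-computable coupling term $\|\sqrt{\tau_j}(u_j-u_j^h)\|^2$ before summing over $i\neq j$. The delicate point you flag at the end --- that the indefinite cross term and the reaction-weighted factor $\|\sqrt{\tau_j}\,(u_i-u_i^h)\|_{0,\tilde\Omega}$ cannot be cleanly absorbed into the left-hand side --- is present in the paper's proof as well, where it is handled by the same Young-type splitting with constants hidden in $\preceq$, so your argument matches the paper's in both substance and level of rigor.
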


\begin{proof}
Firstly, from (\ref{eq23})we notice ,
\begin{align*}
a(u_i-u^h_i,u_i-u^h_i)&= \le \bigl|\bigl|\bigl| \nabla (u_i-u_i^h) \bigr|\bigr|\bigr|^2_{\Omega} + \int_{\Omega} \tau_j (u_j-u_j^h)(u_i-u_i^h) \,dx+ \bigl|\bigl| \sqrt{\sigma_i} (u_i-u_i^h)\bigr|\bigr|^2_{0,\partial_3}\\
\\
&\le \bigl|\bigl|\bigl| \nabla (u_i-u_i^h) \bigr|\bigr|\bigr|^2_{\Omega} +\bigl|\bigl| \sqrt{\tau_j } (u_i-u_i^h)\bigr|\bigr|_{0,\tilde{\Omega}} \bigl|\bigl| \sqrt{\tau_j } (u_j-u_j^h)\bigr|\bigr|_{0,\tilde{\Omega}}\\&+ \bigl|\bigl| \sqrt{\sigma_i} (u_i-u_i^h)\bigr|\bigr|^2_{0,\partial_3}\\
\\
&\preceq \bigl|\bigl|\bigl| \nabla (u_i-u_i^h) \bigr|\bigr|\bigr|^2_{\Omega} +\bigl|\bigl| \sqrt{\tau_j } (u_i-u_i^h)\bigr|\bigr|_{0,\tilde{\Omega}}^2+ \bigl|\bigl| \sqrt{\tau_j } (u_j-u_j^h)\bigr|\bigr|_{0,\tilde{\Omega}}^2\\&+ \bigl|\bigl| \sqrt{\sigma_i} (u_i-u_i^h)\bigr|\bigr|^2_{0,\partial_3}\\
\end{align*}
Further from (\ref{eq22}) taking $v_i = u_i-u_i^h$ we obtain,
\begin{align}
\begin{split}
a(u_i-u^h_i,u_i-u^h_i)= \int_{\Omega} f_i(u_i - u^h_i)~dx + &\int_{\partial_2} g_i ( u_i - u_i ^h) ~ds + \int_{\partial_3} h_i (u_i -u_i^h)~ds  \\
- \int_{\Omega} \mathfrak{P} \nabla u_i^h & \cdot \nabla (u_i-u_i^h) ~dx + \int_{\Omega} \tau_j u_j^h (u_i-u_i^h)~dx \\&-\int_{\partial_3} \sigma_i u_i^h (u_i-u_i^h)~ds.
\end{split}
\label{eq25}
\end{align}
We have,
\begin{align}
\int_{\Omega} u_i^{\star} \cdot \nabla(u_i-u_i^h) ~dx + \int _{\Omega} div~ u_i^{\star} (u_i-u_i^h)~dx &= \int_{\partial_2 \cup \partial_3} \nu^t \cdot u_i^{\star} (u_i-u_i^h) ~ds~~,\\&~~\text{valid }\forall~ u^{\star}_i \in H_{2,3}(div,\Omega)\nonumber. 
\end{align}
Inserting this identity into equation (\ref{eq25}) we get ,
\begin{align*}
a(u_i-u^h_i,u_i-u^h_i)= &\int_{\Omega} \mathfrak{P} ( \mathfrak{P}^{-1} u^{\star}_i - \nabla u_i^h) \cdot \nabla (u_i-u_i^h)~dx - \int_{\Omega} (f_i + div~ u^{\star}_i - \tau_j u_j^h )(u_i-u_i^h)~dx \\ +& \int_{\partial_2} ( g_i - \nu^t \cdot u_i^{\star})(u_i-u_i^h) ~ds + \int_{\partial_3} ( h_i - \sigma_i u_i^h - \nu^t \cdot u_i^{\star} ) (u_i-u_i^h) ~ds\\
=&I_1 -I_2+I_3+I_4
\end{align*}
We now estimate each of these expressions. Clearly,
\begin{align*}
I_1 \leq \bigl|\bigl|\bigl| \mathfrak{P}^{-1} u^{\star}_i - \nabla u_h^i \bigr|\bigr|\bigr|_{\Omega}~ \bigl|\bigl|\bigl| \nabla ( u_i -u_i^h) \bigr|\bigr|\bigr|_{\Omega}.~~~\text{ from Cauchy-Schwarz's  Inequality}.
\end{align*}
Now decomposing $\Omega ~into~ \tilde{\Omega} ~and~ \Omega \setminus \tilde{\Omega}$ we estimate ,
\begin{align*}
I_2 \preceq  \frac{1}{2}\bigl|\bigl| \sqrt{\tau_j } (u_i-u_i^h) \bigr|\bigr|^2_{0,\tilde{\Omega}} +\frac{1}{2} \biggl|\biggl|\frac{1}{\sqrt{\tau_j }}(f_i + div~ u_i^{\star} - \tau_j u_j^h) \biggr|\biggr|^2_{0,\tilde{\Omega}}\\+\bigl|\bigl| \chi_{\Omega \setminus \tilde{\Omega}} (f_i + div~ u_i^{\star} - \tau_j u_j^h)\bigr|\bigr|_{0,\Omega}~ \bigl|\bigl|\bigl| \nabla (u_i-u_i^h)\bigr|\bigr|\bigr|_{\Omega}~~;~ab\leq \frac{1}{2}(a^2+b^2)
\end{align*}
Moreover,from  Friedrich's inequality and trace inequality   (\ref{rem1})  we get the following estimates,
\begin{align*}
I_3 \leq \big|\bigl| g_i - \nu^t \cdot u^{\star}_i \bigr|\bigr| _{0,\partial_2} ~ \bigl|\bigl| u_i-u_i^h \bigr|\bigr| _{0,\partial_2} \preceq \big|\bigl| g_i - \nu^t \cdot u^{\star}_i \bigr|\bigr| _{0,\partial_2}~ \bigl|\bigl|\bigl| \nabla (u_i-u_i^h)\bigr|\bigr|\bigr|_{\Omega}\\
I_4 \leq \frac{1}{2}\bigl|\bigl| \sqrt{\sigma_i}(u_i-u^h_i) \bigr|\bigr|^2_{0,\partial_3}+ \frac{1}{2}\biggl|\biggl| \frac{1}{\sqrt{\sigma_i}} \bigl(h_i-\sigma_i u_i^h - \nu^t \cdot u^{\star}_i\bigr) \biggr|\biggr|^2_{0,\partial_3}
\end{align*}
Combining all the above estimates  we conclude ,
\begin{align*}
a(u_i-u^h_i,u_i-u^h_i) &\preceq \biggl(\bigl|\bigl|\bigl| \mathfrak{P}^{-1} u^{\star}_i - \nabla u_h^i \bigr|\bigr|\bigr|_{\Omega} + \big|\bigl| g_i - \nu^t \cdot u^{\star}_i \bigr|\bigr| _{0,\partial_2} \\&+ \bigl|\bigl| \chi_{\Omega \setminus \tilde{\Omega}} (f_i + div~ u_i^{\star} - \tau_j u_j^h)\bigr|\bigr|_{0,\Omega} \biggr)\bigl|\bigl|\bigl| \nabla (u_i-u_i^h)\bigr|\bigr|\bigr|_{\Omega}\\&+\frac{1}{2}\bigl|\bigl| \sqrt{\sigma_i}(u_i-u^h_i) \bigr|\bigr|^2_{0,\partial_3}+ \frac{1}{2}\biggl|\biggl| \frac{1}{\sqrt{\sigma_i}} \bigl(h_i-\sigma_i u_i^h - \nu^t \cdot u^{\star}_i\bigr) \biggr|\biggr|^2_{0,\partial_3} \\&+ \frac{1}{2}\bigl|\bigl| \sqrt{\tau_j } (u_i-u_i^h) \bigr|\bigr|^2_{0,\tilde{\Omega}} +\frac{1}{2} \biggl|\biggl|\frac{1}{\sqrt{\tau_j }}(f_i + div~ u_i^{\star} - \tau_j u_j^h) \biggr|\biggr|^2_{0,\tilde{\Omega}}\\
&\preceq \frac{1}{2} \biggl(\bigl|\bigl|\bigl| \mathfrak{P}^{-1} u^{\star}_i - \nabla u_i^h \bigr|\bigr|\bigr|_{\Omega} + \big|\bigl| g_i - \nu^t \cdot u^{\star}_i \bigr|\bigr| _{0,\partial_2} \\&+ \bigl|\bigl| \chi_{\Omega \setminus \tilde{\Omega}} (f_i + div~ u_i^{\star} - \tau_j u_j^h)\bigr|\bigr|_{0,\Omega} \biggr)^2 + \frac{1}{2} \bigl|\bigl|\bigl| \nabla (u_i-u_i^h)\bigr|\bigr|\bigr|_{\Omega}^2\\& +\frac{1}{2}\bigl|\bigl| \sqrt{\sigma_i}(u_i-u^h_i) \bigr|\bigr|^2_{0,\partial_3}+ \frac{1}{2}\biggl|\biggl| \frac{1}{\sqrt{\sigma_i}} \bigl(h_i-\sigma_i u_i^h - \nu^t \cdot u^{\star}_i\bigr) \biggr|\biggr|^2_{0,\partial_3} \\&+ \frac{1}{2}\bigl|\bigl| \sqrt{\tau_j } (u_i-u_i^h) \bigr|\bigr|^2_{0,\tilde{\Omega}} +\frac{1}{2} \biggl|\biggl|\frac{1}{\sqrt{\tau_j }}(f_i + div~ u_i^{\star} - \tau_j u_j^h) \biggr|\biggr|^2_{0,\tilde{\Omega}}
\end{align*}
Simplifying, we immediately obtain ,
\begin{align*}
& \bigl|\bigl|\bigl| \nabla (u_i-u_i^h) \bigr|\bigr|\bigr|^2_{\Omega} + \bigl|\bigl| \sqrt{\tau_j } (u_i-u_i^h)\bigr|\bigr|^2_{0,\tilde{\Omega}} + \bigl|\bigl| \sqrt{\sigma_i} (u_i-u_i^h)\bigr|\bigr|^2_{0,\partial_3}\\
&\preceq \biggl|\biggl|\frac{1}{\sqrt{\tau_j }}(f_i + div~ u_i^{\star} - \tau_j u_j^h) \biggr|\biggr|^2_{0,\tilde{\Omega}} + ~ \biggl|\biggl| \frac{1}{\sqrt{\sigma_i}} \bigl(h_i-\sigma_i u_i^h - \nu^t \cdot u^{\star}_i\bigr) \biggr|\biggr|^2_{0,\partial_3}\\&+ \biggl(\bigl|\bigl|\bigl| \mathfrak{P}^{-1} u^{\star}_i - \nabla u_h^i \bigr|\bigr|\bigr|_{\Omega} + \big|\bigl| g_i - \nu^t \cdot u^{\star}_i \bigr|\bigr| _{0,\partial_2} \\&+ \bigl|\bigl| \chi_{\Omega \setminus \tilde{\Omega}} (f_i + div~ u_i^{\star} - \tau_j u_j^h)\bigr|\bigr|_{0,\Omega} \biggr)^2
\end{align*}
\begin{align*}
\i.e, a(u_i-u_i^h,u_i-u_i^h)& \preceq \biggl|\biggl|\frac{1}{\sqrt{\tau_j }}(f_i + div~ u_i^{\star} - \tau_j u_j^h) \biggr|\biggr|^2_{0,\tilde{\Omega}} + ~ \biggl|\biggl| \frac{1}{\sqrt{\sigma_i}} \bigl(h_i-\sigma_i u_i^h - \nu^t \cdot u^{\star}_i\bigr) \biggr|\biggr|^2_{0,\partial_3}\\&+ \biggl(\bigl|\bigl|\bigl| \mathfrak{P}^{-1} u^{\star}_i - \nabla u_h^i \bigr|\bigr|\bigr|_{\Omega} + \big|\bigl| g_i - \nu^t \cdot u^{\star}_i \bigr|\bigr| _{0,\partial_2} \\&+ \bigl|\bigl| \chi_{\Omega \setminus \tilde{\Omega}} (f_i + div~ u_i^{\star} - \tau_j u_j^h)\bigr|\bigr|_{0,\Omega} \biggr)^2+\bigl|\bigl| \sqrt{\tau_j} (u_j-u_j^h) \bigr|\bigr|^2
\end{align*}
Now using the obvious inequality ,
\begin{align*}
(a+b)^2 \leq \ 2 (a^2+b^2)
\end{align*}
and from  the definition ,
\begin{align*}
a(\mathbf{u}-\mathbf{u}_h , \mathbf{u} - \mathbf{u}_h) = \sum_{i=1}^2a(u_i-u^h_i , u_i - u^h_i) 
\end{align*}
we finally  get  the  following estimate (\ref{eq24}).
\end{proof}
\subsection{Lower estimate for the error}
\begin{theorem}
For the error in the energy norm (\ref{eq23}) we have the lower bound ,
\begin{align}
a(\mathbf{u-u}^h,\mathbf{u-u}^h) \geq 2 \bigl(J(\mathbf{u}^h_{\star}) -J(\mathbf{v}_{\star})\bigr) ~~~~~\forall~ \mathbf{v}_{\star} \in (H^1_{\partial_1})^2
\label{eq26}
\end{align}
\begin{proof}
We have from (\ref{e1}) ,
\begin{align*}
2\bigl( J(\mathbf{u^h_{\star}}) - J(\mathbf{u_{\star}}) \bigr)&=a(\mathbf{u^h_{\star}},\mathbf{u^h_{\star}}) -2\overline{G}(\mathbf{u^h_{\star}})-a(\mathbf{u_{\star}},\mathbf{u_{\star}}) + 2 \overline{G}(\mathbf{u_{\star}})\\
&= a(\mathbf{u^h_{\star}},\mathbf{u^h_{\star}}) - a(\mathbf{u_{\star}},\mathbf{u_{\star}}) +2a(\mathbf{u_{\star}},\mathbf{u_{\star}})-2a(\mathbf{u_{\star}}, \mathbf{u^h_{\star}})\\
&= a(\mathbf{u^h_{\star}},\mathbf{u^h_{\star}}) - a(\mathbf{u_{\star}},\mathbf{u_{\star}}) + 2a(\mathbf{u_{\star}},\mathbf{u_{\star}}- \mathbf{u^h_{\star}})\\
&=a(\mathbf{u^h_{\star}},\mathbf{u^h_{\star}}) + a(\mathbf{u_{\star}},\mathbf{u_{\star}})-2a(\mathbf{u_{\star}},\mathbf{u^h_{\star}})\\
&= a(\mathbf{u_{\star}}- \mathbf{u^h_{\star}},\mathbf{u_{\star}}- \mathbf{u^h_{\star}})\\
&= a(\mathbf{u-u}_h,\mathbf{u-u}_h)\\
\end{align*}
In as much as $\mathbf{u_{\star}}$ minimizes the energy functional, we have $J(\mathbf{u_{\star}})\leq J(\mathbf{v}_{\star})$ for any $\mathbf{v}_{\star}$ in $(H^1_{\partial_1})^2$ and consequently it yields (\ref{eq26}).
\end{proof}
\end{theorem}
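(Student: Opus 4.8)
The plan is to obtain (\ref{eq26}) directly from the Dirichlet (energy) principle, so that no fresh inequalities are required: the bound will follow from the quadratic identity (\ref{e1}) combined with the fact, recorded in Remark (\ref{rem1}), that $\mathbf{u}_\star$ minimizes the energy functional $J$ over $(H^1_{\partial_1})^2$. The two ingredients I need are therefore (i) an exact expression for $J(\mathbf{u}^h_\star)-J(\mathbf{u}_\star)$ in terms of the energy norm of the error, and (ii) the variational inequality satisfied by the minimizer.

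For (i) I would establish the vector analogue of (\ref{e1}), namely $J(\mathbf{u}^h_\star)-J(\mathbf{u}_\star)=\tfrac12\,a(\mathbf{u}-\mathbf{u}^h,\mathbf{u}-\mathbf{u}^h)$. First I expand the difference using $J(\mathbf{v}_\star)=\tfrac12 a(\mathbf{v}_\star,\mathbf{v}_\star)-\overline{G}(\mathbf{v}_\star)$ and then substitute the Galerkin characterization $\overline{G}(\mathbf{w})=a(\mathbf{u}_\star,\mathbf{w})$, which holds for every $\mathbf{w}\in(H^1_{\partial_1})^2$ because $\mathbf{u}_\star$ solves the weak problem. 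This reduces the difference to $\tfrac12\,[\,a(\mathbf{u}^h_\star,\mathbf{u}^h_\star)-2a(\mathbf{u}_\star,\mathbf{u}^h_\star)+a(\mathbf{u}_\star,\mathbf{u}_\star)\,]$, which I collapse into $\tfrac12\,a(\mathbf{u}_\star-\mathbf{u}^h_\star,\mathbf{u}_\star-\mathbf{u}^h_\star)$. Finally, since $\mathbf{u}=\mathbf{U}+\mathbf{u}_\star$ and $\mathbf{u}^h=\mathbf{U}+\mathbf{u}^h_\star$ share the same lift $\mathbf{U}$ of the Dirichlet data, we have $\mathbf{u}_\star-\mathbf{u}^h_\star=\mathbf{u}-\mathbf{u}^h$, which gives the claimed identity.

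Ingredient (ii) then finishes the proof in one line. Because $\mathbf{u}_\star$ is the minimizer of $J$ on $(H^1_{\partial_1})^2$, we have $J(\mathbf{u}_\star)\le J(\mathbf{v}_\star)$, hence $-J(\mathbf{u}_\star)\ge -J(\mathbf{v}_\star)$, for every admissible $\mathbf{v}_\star$. Multiplying the identity of step (i) by $2$ and inserting this inequality yields $a(\mathbf{u}-\mathbf{u}^h,\mathbf{u}-\mathbf{u}^h)=2\,(J(\mathbf{u}^h_\star)-J(\mathbf{u}_\star))\ge 2\,(J(\mathbf{u}^h_\star)-J(\mathbf{v}_\star))$, which is exactly (\ref{eq26}).

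The step I expect to be the main obstacle is the collapse in ingredient (i), since it relies on the symmetry $a(\mathbf{u}_\star,\mathbf{u}^h_\star)=a(\mathbf{u}^h_\star,\mathbf{u}_\star)$ and, more fundamentally, on $J$ genuinely possessing $\mathbf{u}_\star$ as a minimizer. For the non-cooperative system the componentwise form carries the alternating signs $(-1)^{i+1}$ and $(-1)^i$ in front of the diffusion and coupling terms, so I would have to check carefully that, after forming $a(\mathbf{u},\mathbf{v})=\sum_{i} a(u_i,v_i)$, the cross terms $\tau_2 u_2 v_1$ and $\tau_1 u_1 v_2$ assemble into a symmetric bilinear form (equivalently, that the full operator is formally self-adjoint). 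Only once this structural symmetry is confirmed do the Galerkin identity and the minimization principle legitimately apply; the remaining manipulations are purely algebraic.
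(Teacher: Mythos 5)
Your proposal is, step for step, the paper's own proof: the same expansion of $2\bigl(J(\mathbf{u}^h_{\star})-J(\mathbf{u}_{\star})\bigr)$ from the definition of $J$, the same substitution of the Galerkin identity $\overline{G}(\cdot)=a(\mathbf{u}_{\star},\cdot)$, the same collapse of the quadratic expression into $a(\mathbf{u}_{\star}-\mathbf{u}^h_{\star},\mathbf{u}_{\star}-\mathbf{u}^h_{\star})=a(\mathbf{u}-\mathbf{u}^h,\mathbf{u}-\mathbf{u}^h)$, and the same final appeal to $J(\mathbf{u}_{\star})\leq J(\mathbf{v}_{\star})$. The only difference is that you explicitly flag the structural hypothesis that makes these manipulations legitimate, namely symmetry (and positivity) of the assembled bilinear form, whereas the paper uses it silently.

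That flagged check, however, is not a formality: it fails for the system as the paper defines it, so the obstacle you name is a genuine gap, shared by your proposal and by the paper's proof. Summing the componentwise forms, the coupling contribution is
\begin{equation*}
c(\mathbf{u},\mathbf{v})=\int_{\Omega}\bigl(\tau_1 u_1 v_2-\tau_2 u_2 v_1\bigr)\,dx,
\qquad
c(\mathbf{u},\mathbf{v})-c(\mathbf{v},\mathbf{u})=\int_{\Omega}(\tau_1+\tau_2)\bigl(u_1v_2-u_2v_1\bigr)\,dx,
\end{equation*}
which cannot vanish identically because $\tau_1+\tau_2>0$; even under the cooperative sign convention symmetry would force $\tau_1=\tau_2$. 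The collapse step --- passing from $a(\mathbf{u}^h_{\star},\mathbf{u}^h_{\star})+a(\mathbf{u}_{\star},\mathbf{u}_{\star})-2a(\mathbf{u}_{\star},\mathbf{u}^h_{\star})$ to $a(\mathbf{u}_{\star}-\mathbf{u}^h_{\star},\mathbf{u}_{\star}-\mathbf{u}^h_{\star})$, which is exactly the fourth-to-fifth line of the paper's computation --- requires $a(\mathbf{u}^h_{\star},\mathbf{u}_{\star})=a(\mathbf{u}_{\star},\mathbf{u}^h_{\star})$ and therefore fails. Worse, with the alternating signs $(-1)^{i+1}$ the diagonal of the principal part is $\int_{\Omega}\mathfrak{P}\bigl(|\nabla v_1|^2-|\nabla v_2|^2\bigr)\,dx$, so taking $\mathbf{v}=(0,v_2)$ with $v_2\in C^{\infty}_c(\Omega)$ nonzero gives $a(\mathbf{v},\mathbf{v})<0$ and $J(t\mathbf{v})\to-\infty$ as $t\to\infty$: the functional $J$ is unbounded below on $(H^1_{\partial_1})^2$, so $\mathbf{u}_{\star}$ is not a minimizer and the final inequality $J(\mathbf{u}_{\star})\leq J(\mathbf{v}_{\star})$ is also unjustified (the same issue already undermines identity (\ref{e1}), on which both arguments rest). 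The theorem as stated is recovered only after the system is rewritten so that the assembled form is symmetric and positive semi-definite --- e.g.\ both equations carrying $-\mathrm{div}$ and $\tau_1=\tau_2$, which is evidently what the formal manipulations presuppose --- so your caveat is precisely the right one to raise, and neither your proposal nor the paper resolves it.
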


Preceding estimates can be extended in more general context.
\section{Type of equations and corresponding different boundary conditions}
\label{sec6}
\subsection{Dirichlet boundary condition :}In this case $\partial_3 ~and~ \partial_2 = \emptyset$, hence the second and last terms on the right hand side of estimate  (\ref{eq24}) do not exist, so we obtain the following version of estimate:
\begin{align*}
a(\mathbf{u-u^h,u-u^h}) \preceq \sum_{i=1}^2 \biggl[\biggl|\biggl|\frac{1}{\sqrt{\tau_j }}\biggl(f_i + div~u_i^{\star} -\tau_j  u_h \biggr) \biggr|\biggr|^2_{0,\tilde{\Omega}} +\bigl|\bigl|\bigl| \mathfrak{P}^{-1}u^{\star}_i - \nabla u_i^h\bigr|\bigr|\bigr|^2_{\Omega} \\+ \bigl|\bigl| \chi_{\Omega \setminus \tilde{\Omega}} (f_i + div~ u^{\star}_i - \tau_j u_j^h) \bigr|\bigr|^2_{0,\Omega}\biggr]
\end{align*}

\subsection{Dirichlet/Neumann boundary condition:}In this case $\partial_3=\emptyset$ and we obtain following variant of estimate:
\begin{align*}
a(\mathbf{u-u^h,u-u^h}) \preceq \sum_{i=1}^2 \biggl[\biggl|\biggl|\frac{1}{\sqrt{\tau_j }}\biggl(f_i + div~u_i^{\star} -\tau_j  u_h \biggr) \biggr|\biggr|^2_{0,\tilde{\Omega}} +\bigl|\bigl|\bigl| \mathfrak{P}^{-1}u^{\star}_i - \nabla u_i^h\bigr|\bigr|\bigr|^2_{\Omega} &\\+ \bigl|\bigl| \chi_{\Omega \setminus \tilde{\Omega}} (f_i + div~ u^{\star}_i - \tau_j u_j^h) \bigr|\bigr|^2_{0,\Omega} + \bigl|\bigl|g_i - \nu^t \cdot u^{\star}_i\bigr|\bigr|^2_{0,\partial_2} \biggr] 
\end{align*} 
\subsubsection{Remark}
For such a mixed boundary condition one needs, in general, to compute global constants depending on $\Omega,\partial_1~and~\partial \Omega$, in case of $\tau_j =0$ preceding estimate presented in  \citep{repin2004posteriori}. But our  way of obtaining is much simpler and more straightforward.

\section{Numerical example}
\label{sec7}
In this section we present a  numerical example to verify the analysis illustrated  in the previous sections. 

\subsection{Implementation} Let the quadrangulation $\mathcal{T}_h$ be the collection of all cells $K \subset \Omega$ such that $K \cap \partial \Omega = \emptyset$. 
We define the finite element space $X_h$ as follows :
\begin{align*}
X_h : = \left \{ (u_1,u_2) ~:~u_k=\sum_{j \in I^u} \alpha_j^k \psi_j~,~\alpha_j^k \in \mathbb{R}~, k= 1,2 \right\}
\end{align*}
where $I^u$ is the inner nodes. For $ i \in I^u$ , let $\Psi_i$ be the web spline of order $3$ which is the tensor product of scled translate of the function  $\psi$ defined :
\begin{align*}
\psi(x) =
\left\{
	\begin{array}{ll}
		\frac{x^2}{2}  & \mbox{on } [0,1), \\
		\frac{1}{2} +(x-1) - (x-1)^2 & \mbox{on } [1,2),\\
		\frac{(x-3)^2}{2}  & \mbox{ on} [2,3)
	\end{array}
\right.
\end{align*}
and
\begin{align*}
X_{\tilde{\psi}} := \oplus _{J \in \mathcal{T}_h} X_J
\end{align*}
where, $X_J$ is  the one dimensional subspace spanned by the function $\tilde{\psi}_J$  defined
\begin{align*}
\tilde{\psi}_J (x,y) := \omega (x,y)\psi \left(\frac{x}{h} -\ell_0 \right) \psi \left(\frac{y}{h} -\ell_1 \right) 
\end{align*}
We choose the weight function  $\omega(x,y)= (x^2+y^2 - 1)(4-x^2-y^2)$. Therefore the approximation space is taken to 
\begin{align*}
X^2_h : = X_h \oplus X_{\tilde{\psi}}
\end{align*} 

Again the solution lies in the set of admissible functions $H^1_{g} := \{ u \in H^1(\Omega) : u=g ~\text{ on } \partial \Omega\}$ which is clearly not vanishes at the boundary. In order to describe this set we choose $u_g $ ( i.e, $u$ takes the value $g$ on the boundary)  and consequently the solution is determined by $u_g+u_0$ with $u_0 \in H_0^1$. Finally,
\begin{align*}
H^1_g =u_g \oplus H^1_0
\end{align*}

\subsection{Residual for the partial differential equation} The residual of the Ritz-Galerkin approximation $\mathbf{u}_h$ is defined as ,
\begin{equation*}
\mathbf{r}(x,y)= (L\mathbf{u}_h)(x,y) - \mathbf{f}(x,y)
\end{equation*}

where , L is the differential operator of the boundary value problem. The relative error,

\begin{equation*}
\epsilon_{\text{Res}} = ||\mathbf{r}||_{0,\Omega}/||\mathbf{f}||_{0,\Omega}
\end{equation*}

with $||.||_{0,\Omega}$  denoting the $L_2$ norm on $\Omega$ provides a measure of accuracy for the solution
without having to resort to grid refinement.\\

In  the  following example we have a quadrant domain  $$\Delta = \{(x,y) : 0\le x^2+y^2  \le 1 , x,y \ge 0\}$$   and $ n = 2$ . Furthermore ,we use an SSOR-preconditioned cg-iteration  with  the following convergence  stopping criteria. 
\begin{align*}
\epsilon_{\text{Res}} =\mathcal{O}(10^{-6} )
\end{align*}
In Table (\ref{tab1}) we calculate the relative error $\epsilon$ and  Figure (\ref{fig1}) shows the numerical solution as well the residual equation at $h = 2^{-4}$.

\subsection{Population-dynamics systems} 

Consider the following equations that models the steady state solution of the population which is subdivided into two populations adults and children. The variables $u_1$ and $u_2$ represents the concentration of the populations of the adults and the children respectively. The parameters $\mathfrak{P}$ gives the rate of which a child becomes adult as well  the birth rate of the children ,$\tau_k$ represents the decrease in population size because of overcrowding effects and the death rate of the adults. Interested  readers are encouraged to go through \cite{mittelmann2000solving},\cite{doubova2016extinction} ,\cite{lin2004coexistence} and the references there in for details descriptions. \\

The model coefficients $\mathfrak{P}=
 \begin{bmatrix}
 x^2y & y\\
 y & y
 \end{bmatrix}
$ 

\begin{align*}
- \textrm{div} (\mathfrak{P}  \nabla u_1 )-0.05y ~u_2 &= -e^{x+y}\\
 \textrm{div} (\mathfrak{P}  \nabla u_2 )+2 x^2 ~u_1 &= -e^{x+y}
\end{align*}
 
 $\bullet$ \enquote{Mixed boundary conditions of Dirichlet-Neumann type} :\\

\begin{align*}
u_1&=y=u_2 ~~\mbox{ on }~~ x=0\\
 \nu^t \cdot \mathfrak{P}\nabla u_1 &=1=\nu^t \cdot \mathfrak{P}\nabla u_2~~ \mbox{ on }~~ y =0 \\
  \nu^t \cdot \mathfrak{P}\nabla u_1 + \sigma_1 u_1 &=0= \nu^t \cdot \mathfrak{P}\nabla u_2 + \sigma_2 u_2 ~~\mbox{ on } ~~x^2+y^2=1
\end{align*}

 \vspace{5mm}
 
 \begin{figure}[]
\begin{center} 
  \includegraphics[width=0.95\textwidth]{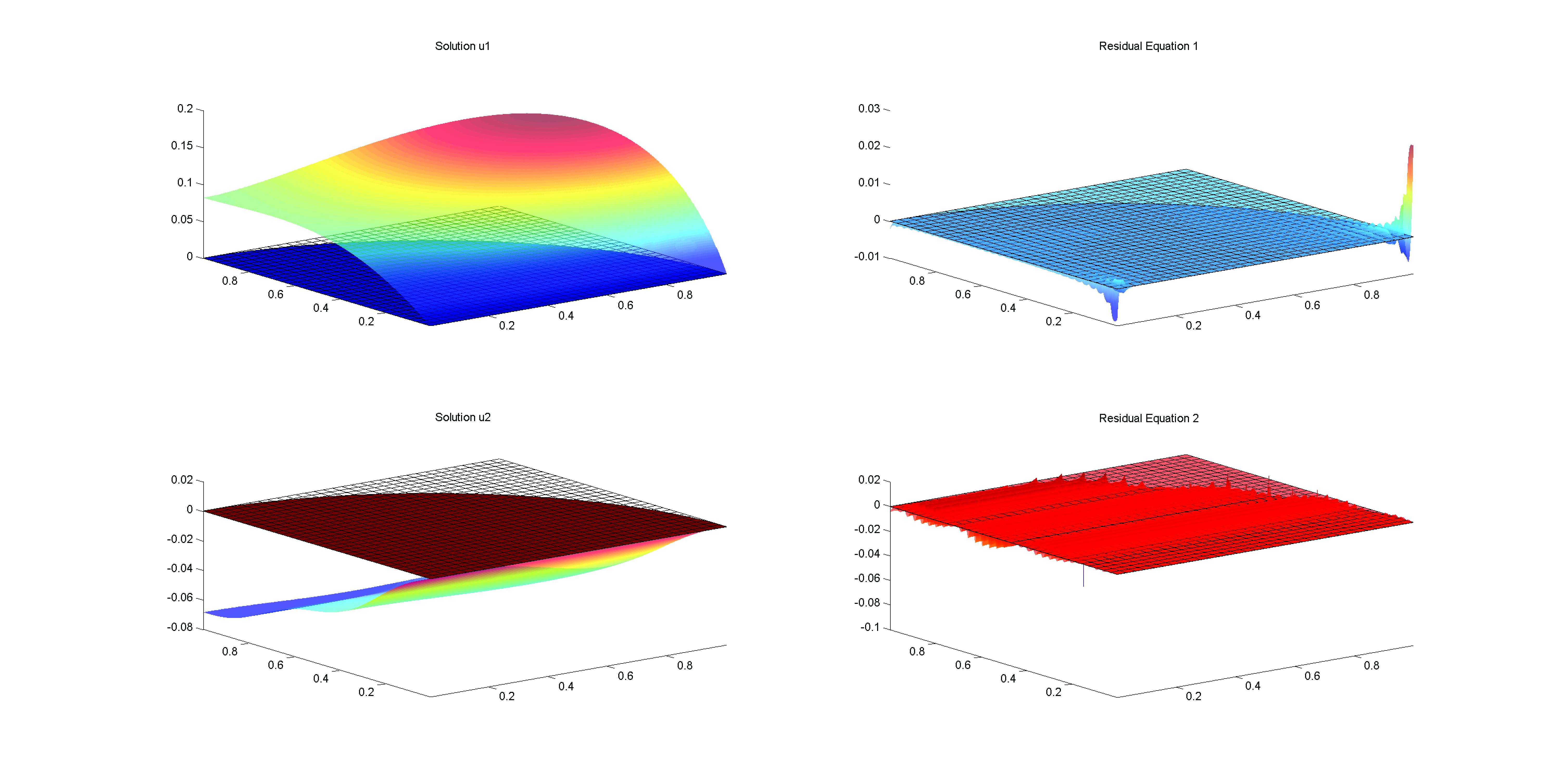}
\caption{Numerical Solution and residual equation.}
\end{center}
\label{fig1}
\end{figure}

\begin{center}
\begin{tabular}{ |p{10cm}||p{3cm}|  }
 \hline
 \multicolumn{2}{|c|}{Table  : Residual Errors} \\
 \hline
 $h$ & $\epsilon$ \\
 \hline
 $2^{-1}$   &$8.417 \times 10^{-2}$   \\
 
 $2^{-2}$   &$9.004 \times 10^{-4}$ \\
 
 $2^{-3}$  &$2.018 \times 10^{-6}$ \\
 
$2^{-4}$    &$1.005 \times 10^{-6}$ \\

 \hline
\end{tabular} 
\label{tab1}
\end{center}
\vspace{5mm}

 \section{Conclusion and perspectives} The webs method has a very wide range of applications. The most interesting one from the computational point of view, is that it is completely constrained. As a future extension, we propose to investigate the numerical analysis of the webs based isogeometric method , which involves the analysis of time-dependent convection-diffusion equations. Numerical simulations with this method give very promising results. Another extensions concerns subdivisions with nonmatching grids. Concerning our future work, our intention is to focus more on reaction-diffusion systems and non linear problems in the frame work of weighted isogeometric analysis. We would like to improve the used method based on b-splines by extending into rational b-splines. By using this method it should be possible to encapsulate the exact geometry representation in the analysis  while providing geometric  flexibility. 

\section{References}

\end{document}